\documentclass[
final
, nomarks
]{dmtcs-episciences}




\def\xskip{\hskip 7pt plus 3pt minus 4pt}

\def\slug{\quad\hbox{\kern1.5pt\vrule width2.5pt height6pt depth1.5pt\kern1.5pt}\medskip}
\def\noskipslug{\quad\hbox{\kern1.5pt\vrule width2.5pt height6pt depth1.5pt\kern1.5pt}}

\newdimen\algindent
\newif\ifitempar \itempartrue 
\def\algindentset#1{\setbox0\hbox{{\bf #1.\kern.25em}}\algindent=\wd0\relax}
\def\algbegin #1 #2{\algindentset{#21}\alg #1 #2} 
\def\aalgbegin #1 #2{\algindentset{#211}\alg #1 #2} 
\def\alg#1(#2). {\medbreak 
  \noindent{\bf#1}({\it#2\/}).\xskip\ignorespaces}
\def\algstep#1.{\ifitempar\smallskip\noindent\else\itempartrue
  \hskip-\parindent\fi
  \hbox to\algindent{\bf\hfil #1.\kern.25em}%
  \hangindent=\algindent\hangafter=1\ignorespaces}



\def\N{{\bf N}}  

\def\op#1{\mathop{\hbox{#1}}\nolimits}

\def\pr{\mathop{\hbox{\bf P}}\nolimits}
\def\ex{\mathop{\hbox{\bf E}}\nolimits}
\def\var{\mathop{\hbox{\bf V}}\nolimits}
\def\one{\mathop{\hbox{\bf 1}}\nolimits}

\def\edge{\mathrel-\!\!\mathrel-}

\newcount\thmcount  
\thmcount=1
\newcount\sectcount  
\sectcount=1
\newcount\figcount  
\figcount=1
\newcount\eqcount  
\eqcount=1

\def\oldno#1{\eqno({\oldstyle#1})}

\def\adveq{\oldno{\the\eqcount}\global\advance\eqcount by 1}  

\def\advsect{\section\the\sectcount\global\advance\sectcount by 1. }

\outer\def\parenproclaim #1 (#2).#3\par{\medbreak
  \noindent{\bf #1}\enspace\rm({\it #2\/}).\nobreak\ignorespaces{\sl #3\par}
  \ifdim\lastskip<\medskipamount \removelastskip\penalty55\medskip\fi}



\def\Bin{\op{{\rm binomial}}}
\def\Pos{\op{{\rm Poisson}}}
\def\Geo{\op{{\rm geometric}}}
\def\Uni{\op{{\rm uniform}}}
\def\TV{{\rm TV}}

\def\advthm{\the\thmcount\global\advance \thmcount by 1}
\def\eps{\epsilon}
\def\d{\,d}
\def\Eta{{\rm H}}

\def\pmax{p_{\rm max}}
\def\A{{\cal A}}
\def\F{{\rm F}}
\def\given{\,|\,}
\def\comma{,\,}
\def\Stab{{\rm Stab}}
\def\Aut{{\rm Aut}}

\usepackage[utf8]{inputenc}
\usepackage{subfigure}
\usepackage{amsthm}
\usepackage{amsmath}
\allowdisplaybreaks
\makeatletter
\let\over\@@over
\let\atop\@@atop
\let\atopwithdelims\@@atopwithdelims
\makeatother

\newtheorem{thm}{Theorem}
\newtheorem{lem}[thm]{Lemma}
\let\[\relax \let\]\relax
\DeclareRobustCommand{\[}{\begin{equation}}
\DeclareRobustCommand{\]}{\end{equation}}


\author{Anna M. Brandenberger
  \and Luc Devroye
  \and Marcel K. Goh
  \and Rosie Y. Zhao}
\title{Leaf multiplicity in a Bienaym\'e--Galton--Watson tree}

\affiliation{
School of Computer Science, McGill University, Montr\'eal}
\keywords{Multiplicity, Bienaym\'e--Galton--Watson trees, automorphisms, R\'enyi entropy.}
\received{2021-05-26}
\revised{2022-02-18}
\accepted{2022-02-19}
\begin{document}
\publicationdetails{24}{2022}{1}{7}{7515}
\maketitle
\begin{abstract}
This note defines a notion of multiplicity for nodes in a rooted tree and presents an
asymptotic calculation of the maximum multiplicity over all
leaves in a Bienaym\'e--Galton--Watson tree with critical offspring distribution $\xi$,
conditioned on the tree being of size $n$. In particular, we show that if $S_n$ is the
maximum multiplicity in a conditioned Bienaym\'e--Galton--Watson tree, then
$S_n = \Omega(\log n)$ asymptotically in probability and under the further assumption
that $\ex\{2^\xi\} < \infty$, we have $S_n = O(\log n)$ asymptotically in probability as well.
Explicit formulas are given for the constants in both bounds. We conclude by discussing links
with an alternate definition of multiplicity that arises in the root-estimation problem.
\end{abstract}

\section{Introduction}
\label{sec:intro}

Equivalence between two distinct mathematical objects is a far-reaching concept in mathematics. When two structures are similar, one may define a relation under which they are regarded as one and the same. The term ``multiplicity'' is often used to indicate the extent to which an object is, in some sense, not structurally unique (or how often it is repeated in a suitably-defined multiset).
Towards a concept of the multiplicity of a node in a tree, consider the small example depicted in
Fig.~\ref{fig:multdef}

\begin{figure}
\begin{center}
  \subfigure{\includegraphics{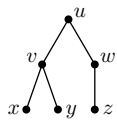}}
  \caption{A rooted tree with six nodes. The pair $x$ and $y$ are similar, but $x$ and $z$ are not.}
  \label{fig:multdef}
\end{center}
\end{figure}

\subsection{Definitions and notation}
Consider a tree $T$ rooted at
a node $u$. For a node $v$ in the tree, we let $T_v$ denote the subtree rooted at $v$. Let $v$ and $w$ be nodes in the tree and let $v=v_1, v_2,\ldots, v_n=u$ and $w=w_1, w_2, \ldots, w_m=u$ be the paths from $v$ and $w$, respectively, to the root. We say that $v$ and $w$ are {\it identical} and write $v\equiv w$ if the paths have the same length and $T_{v_j}$ and $T_{w_j}$ are isomorphic as rooted ordered trees for $1\leq j\leq n$.
For example, in Fig.~\ref{fig:multdef}, the nodes $x$ and $y$ are identical in this sense, but different from node $z$.

It is clear that $\equiv$ defines an equivalence relation on the set of nodes in the tree, so we may now define the {\it multiplicity} $\sigma(v)$ of a node $v$ to be the size of the equivalence class $[v]$ under the relation. The
{\it leaf multiplicity} (or simply {\it multiplicity}, when no confusion can arise)
$S(T)$ of a rooted tree $T$ is the maximum value of $\sigma(v)$, taken over all nodes $v$ of $T$. The name ``leaf multiplicity'' is motivated by the fact that the function $\sigma$ increases monotonically away from the root, so that $S(T)$ remains the same when the maximum is only computed over the set of {\it leaves} of $T$.

Note that $\equiv$ is not the only structural equivalence relation one can define on the set of nodes in a
tree, and thus $\sigma$ is only one of many possible notions of leaf multiplicity.
Towards the end of this paper, we will explore an alternate definition $\mu$ of multiplicity which extends well
to free trees as well as rooted trees, and discuss the relationship between the two functions $\sigma$ and $\mu$.

\subsection{The Bienaym\'e--Galton--Watson model}

For a random variable $\xi$ taking values in the nonnegative integers, a {\it Bienaym\'e--Galton--Watson tree}~\cite{athreya1972branching} is a rooted ordered tree in which every node has $i$ children with probability $p_i = \pr\{\xi = i\}$. We say that $\xi$ is the {\it offspring distribution} of the tree.

Trees arising from this process are often called {\it Galton--Watson trees}, but we include the name of I.-J.~Bienaym\'e~\cite{bienayme1845}, since his work predates (and is more mathematically precise than) the analysis of F. Galton and H. W. Watson~\cite{galtonwatson1874}; see~\cite{jagers2009} for an extended account of the history of branching processes. We shall deal with critical Bienaym\'e--Galton--Watson trees; that is, trees whose offspring distributions satisfy $\ex\{\xi\} = 1$ and $\var\{\xi\} \in (0,\infty)$. This restriction on the variance ensures that $p_i\neq 1$, so that the tree is finite almost surely. The Bienaym\'e--Galton--Watson trees that we shall study are {\it conditioned Bienaym\'e--Galton--Watson trees} $T_n$.  Such trees are conditioned on $|T| = n$, where $|T|$ is the number of nodes in the tree.

\subsection{R\'enyi entropy}
It will be convenient to simplify our notation with some information-theoretic
definitions. Letting $p_i = \pr\{\xi = i\}$, for $\alpha>1$
we define the {\it R\'enyi entropy of order $\alpha$}~\cite{renyi1961measures} (see also~\cite{jacquet1999entropy}) to be the value
\[\Eta_\alpha(\xi) = {1\over \alpha - 1}\log_2 {1\over \sum_{i\geq 0} {p_i}^\alpha}.\]
As $\alpha\to 1$, this approaches the {\it binary (Shannon) entropy}~\cite{shannon1948}
\[\Eta(\xi) = \sum_{i\geq 0} p_i \log_2 {1\over p_i}.\]
Since $\xi$ will be fixed throughout the paper, for brevity we will let $\Eta_\alpha = \Eta_\alpha(\xi)$ and
$\Eta = \Eta(\xi)$.

Fix an offspring distribution $\xi$ with mean $1$ and nonzero finite variance;
let $T_n$ be a conditioned Bienaym\'e--Galton--Watson tree of size $n$ with this offspring distribution. The leaf
multiplicity $S(T_n)$ of this tree is a random variable, and will be denoted $S_n$.
The main result of this paper gives bounds on $S_n$ that are obeyed asymptotically in probability.

\begin{thm}
\label{mainthm}
Let $\xi$ be an offspring distribution with $\ex\{\xi\}=1$
and $\var\{\xi\}\in (0,\infty)$. If $S_n$ is the multiplicity of a conditioned Bienaym\'e--Galton--Watson tree of size $n$ with offspring distribution $\xi$, then letting
\[\gamma = \max_{k \geq 2}{p_0}^k {p_k}^{k/(k-1)},\]
we have for all $\eps > 0$, 
\[\pr \bigg\{ S_n \geq (1-\eps) {\log_2 n \over \log_2(1/\gamma)} \bigg\} \to 1\]
as $n\to\infty$, and under the further assumption
that $\ex\{2^\xi\}<\infty$, we have the upper bound
\[\pr \bigg\{ S_n \leq (1+\eps) {2\log_2 n\over \Eta_2} \bigg\} \to 1\]
as $n\to\infty$, where $\Eta_2$ is the R\'enyi entropy of order $2$
of the random variable $\xi$.
\end{thm}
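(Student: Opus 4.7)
The plan is to prove the two bounds independently, via fringe‐subtree occurrences for the lower bound and a pairwise first moment estimate for the upper bound.

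For the lower bound, I would plant a complete $k$-ary fringe subtree. Let $k^{*}\geq 2$ attain the maximum in $\gamma$, and let $\tau_h$ denote the complete rooted ordered $k^{*}$-ary tree of height $h$, which has $m := (k^{*})^{h}$ leaves. Any two leaves of $\tau_h$ are identical under $\equiv$, because the subtrees rooted at their corresponding ancestors are isomorphic copies of complete $k^{*}$-ary trees of matching height, and this property is preserved once $\tau_h$ is embedded in $T_n$ as a fringe subtree (the ancestors above the root of $\tau_h$ are shared by all of its leaves). The Bienaym\'e--Galton--Watson weight of $\tau_h$ is
\[w_h \;=\; p_0^{m}\, p_{k^{*}}^{(m-1)/(k^{*}-1)} \;=\; \gamma^{m/k^{*}}\, p_{k^{*}}^{-1/(k^{*}-1)},\]
so $-\log_2 w_h = (m/k^{*})\log_2(1/\gamma)+O(1)$. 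Choosing $m=\lfloor(1-\eps)\log_2 n/\log_2(1/\gamma)\rfloor$ makes $n\,w_h$ grow as a positive power of $n$, and a standard second-moment argument on the number of fringe copies of $\tau_h$ in $T_n$ (whose nontrivial covariances come only from nested copies and are of lower order) guarantees at least one copy with probability tending to $1$; whenever such a copy occurs, $S_n\geq m$.

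For the upper bound, I would start from the observation that $S_n\geq m$ forces at least $\binom{m}{2}$ pairs of equivalent nodes in $T_n$. Setting $Q_n := \#\{v<w:v\equiv w\}$, Markov's inequality gives
\[\pr\{S_n\geq m\} \;\leq\; {\ex\{Q_n\}\over\binom{m}{2}}.\]
For any pair $(v,w)$, the relation $v\equiv w$ amounts to requiring that the two sibling subtrees branching off the least common ancestor of $v,w$ be isomorphic as rooted ordered trees with the distinguished descendants in matching positions. Conditioning on the common size $s$ of these sibling subtrees, the contribution to $\ex\{Q_n\}$ is governed by sized BGW collision probabilities, whose exponential decay in $s$ (under $\ex\{2^{\xi}\}<\infty$) is controlled in terms of the R\'enyi-$2$ entropy $\Eta_2$. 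Summing over least-common-ancestor positions and optimizing in $s$ yields a bound of the form $\ex\{Q_n\}\leq n^{2}\cdot 2^{-m\Eta_2 + o(m)}$, and requiring $\ex\{Q_n\}=o(m^{2})$ then forces $m>(1+\eps)\cdot 2\log_2 n/\Eta_2$.

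The main technical obstacle in both directions is extending asymptotic estimates for fringe-subtree quantities from fixed templates to templates of size $\Theta(\log n)$. In the lower bound this amounts to second-moment concentration for the count of fringe copies of $\tau_h$ when $|\tau_h|=O(\log n)$, which relies on local limit theorem estimates for $\pr\{|T|=n\}$. In the upper bound it amounts to justifying the $\Eta_2$-scaled decay of the sized collision probability uniformly over the relevant scale of $s$; the hypothesis $\ex\{2^{\xi}\}<\infty$ is precisely the Chernoff-type moment assumption needed to exclude atypical offspring configurations that would otherwise inflate this decay rate.
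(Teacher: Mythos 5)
Your lower bound uses the same extremal object as the paper (a complete $k$-ary fringe tree with $k$ maximizing $\gamma$), but the concentration step you defer to is exactly where the work lies, and the paper avoids it entirely. Rather than a second-moment argument on fringe counts --- whose covariance structure under the conditioning $|T|=n$ is \emph{not} just ``nested copies,'' and which you would have to control uniformly for templates of size $\Theta(\log n)$ --- the paper places the candidate copies at the disjoint index blocks $1,1+z,1+2z,\ldots$ of the preorder degree sequence, so that under the \emph{unconditioned} product measure the events are exactly independent; the conditioning is then removed by Dwass's cycle lemma at the cost of a $\Theta(n^{3/2})$ factor, which is crushed by $(1-q)^{\Theta(n/\log n)}$. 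This makes the lower bound a pure first-moment-free computation with no concentration input. (Also, your $m=\lfloor(1-\eps)\log_2 n/\log_2(1/\gamma)\rfloor$ is generally not a power of $k^{*}$; you must round up to the next power of $k^{*}$, as the paper does, and check that the weight is still $n^{-(1-\eps)}$ up to constants --- this works but needs saying.)

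The upper bound has a genuine gap. The pair count $Q_n=\#\{v<w: v\equiv w\}$ satisfies $\ex\{Q_n\}=\Omega(n)$ for any offspring distribution with $p_0>0$ and $p_k>0$ for some $k\ge 2$: any two leaf children of a common parent are equivalent, and there are linearly many such ``cherries'' in expectation. Hence $\ex\{Q_n\}/\binom{m}{2}=\Omega(n/\log^2 n)\to\infty$ for $m=\Theta(\log n)$, and your claimed estimate $\ex\{Q_n\}\le n^2 2^{-m\Eta_2+o(m)}$ cannot hold (indeed $Q_n$ does not depend on $m$ at all). The point you are missing is that $v\equiv w$ for a generic pair gives \emph{no} lower bound on the size $s$ of the sibling subtrees at their least common ancestor, so there is no exponential decay to exploit. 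The paper instead works with the least common ancestor of the \emph{entire} equivalence class: if the class has $\ge x$ members spread over $k\ge 2$ identical child subtrees, each such subtree has size $\ge x/k$, and the $k$-fold collision probability $(\sum_i p_i^k)^{x/k}$ does the work; one must then sum over all $k$ and all degrees $d$ of the ancestor, and it is precisely the term $\sum_{k\ge 3}\sum_d p_d\binom{d}{k}\le\ex\{2^\xi\}$ that requires the hypothesis $\ex\{2^\xi\}<\infty$ (your pairwise analysis would never see this hypothesis, which Lemma~\ref{maxleafdeg} shows is essentially necessary: a single high-degree node with many leaf children creates a huge class of pairwise-equivalent nodes with $s=1$). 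Finally, even a repaired pair count carrying a factor $n^2$ would yield the constant $4/\Eta_2$ rather than $2/\Eta_2$; the paper gets a single factor of $n$ because the event is localized at one preorder position via the rotation argument.
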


This theorem will be proved as two separate lemmas in the next section.

\section{Asymptotics of the leaf multiplicity}
\label{sec:asymptotics}

In this section we derive asymptotic
upper and lower bounds on $S_n$. Before we begin, we observe that if
$\pmax = \max_{i\geq 0} p_i$ and $1<\alpha < \beta < \infty$, we have the inequalities
\[e^{-\Eta} \leq \Big(\sum_{i\geq 0} {p_i}^\alpha\Big)^{1/(\alpha-1)} \leq \Big(\sum_{i\geq 0} {p_i}^\beta\Big)^{1/(\beta-1)} \leq \pmax \leq 
\Big(\sum_{i\geq 0} {p_i}^\beta\Big)^{1/\beta} \leq \Big(\sum_{i\geq 0} {p_i}^\alpha\Big)^{1/\alpha} \leq 1,\]
and defining $\Eta_\infty = \log_2(1/\pmax)$, we have the equivalent chain of inequalities
\[\Eta \geq \Eta_\alpha \geq \Eta_\beta \geq \Eta_\infty \geq {\beta-1 \over \beta} \Eta_\beta \geq {\alpha-1\over\alpha}\Eta_\alpha\geq 0.\]
In particular, because we have assumed that $\var\{\xi\}\neq 0$, we have the strict inequality
\[\Big(\sum_{i\geq 0} {p_i}^k\Big)^{1/ k} < \Big(\sum_{i\geq 0} {p_i}^2\Big)^{1/2}\label{strictineq}\]
for all $k> 2$. 

We will prove the upper bound and lower bound for $S_n$ as two separate lemmas.

\begin{lem}
\label{upperbound}
Let $\xi$ be an offspring distribution with mean 1 and nonzero finite variance $\sigma^2$. Suppose further that $\ex\{2^\xi\}$ is finite.
If $S_n$ is the multiplicity of a conditioned Bienaym\'e--Galton--Watson tree of size $n$ with offspring
distribution $\xi$, then
\[\pr\{S_n>(1+\eps)2\log_2 n/\Eta_2\} \to 0 \]
for all $\eps>0$, where $\Eta_2$ is the R\'enyi entropy of order $2$ of the random variable $\xi$.
\end{lem}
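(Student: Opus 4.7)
The plan is a first-moment argument followed by Markov's inequality. By the monotonicity of $\sigma$ along paths away from the root recalled at the start of this section, $S_n = \max_{v\text{ leaf}} \sigma(v)$; moreover, the event $\{S_n \geq m\}$ forces at least $m$ leaves to have multiplicity $\geq m$, since every member of a size-$k$ equivalence class has multiplicity $k$. Hence
\[
\pr\{S_n \geq m\} \;\leq\; \frac{1}{m}\,\ex\bigl|\{v \text{ leaf of } T_n : \sigma(v) \geq m\}\bigr|,
\]
and it suffices to show the right-hand side is $o(1)$ for $m = (1+\eps)\cdot 2\log_2 n/\Eta_2$.

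For each leaf $v$ at depth $D$ with ancestral spine $v = s_0, s_1, \ldots, s_D = \text{root}$, one verifies by induction on the spine (using that the number of identical leaves within $T_{s_i}$ is exactly $k_i$ times the number within $T_{s_{i-1}}$) that
\[
\sigma(v) \;=\; \prod_{i=1}^{D} k_i, \qquad k_i \,=\, \#\bigl\{c \text{ child of } s_i : T_c = T_{s_{i-1}}\text{ as ordered trees}\bigr\}.
\]
Thus $\sigma(v) \geq m$ forces the product of spine-match counts to be at least $m$.

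I would estimate $\pr\{\sigma(v) \geq m\}$ via Janson's local weak convergence: the law of $T_n$ in a neighborhood of a uniformly random leaf converges to that of Kesten's size-biased infinite tree, in which spine vertices have the size-biased offspring distribution $\tilde\xi$ with $\pr\{\tilde\xi = k\} = k p_k$, and the off-spine subtrees are i.i.d.\ critical BGW trees. Conditionally on the past, $k_i - 1 \sim \Bin(\tilde\xi_i - 1, p(T_{s_{i-1}}))$. The hypothesis $\ex\{2^\xi\} < \infty$ guarantees that $\tilde\xi$ has a finite exponential moment ($\ex 2^{\tilde\xi} = \ex\{\xi\,2^\xi\} < \infty$), which is needed to make the moment generating function of $\sum_i \log_2 k_i$ finite. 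Combining a Chernoff-type bound on $\prod k_i$ with the pairwise subtree-match probability $\sum_i p_i^2 = 2^{-\Eta_2}$ at each internal node of the matching-tree structure of the $m$ identical leaves then yields the desired estimate $\pr\{\sigma(v) \geq m\} \leq 2^{-m\Eta_2/2 + o(m)}$.

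Summing over the $\Theta(n)$ leaves of $T_n$ gives $\ex|\{v : \sigma(v) \geq m\}| \leq C n \cdot 2^{-m\Eta_2/2 + o(m)}$; for $m = (1+\eps)\cdot 2\log_2 n/\Eta_2$ this is $O(n^{-\eps}) = o(1)$, so the Markov step completes the proof. The main obstacle will be the sharp constant $\Eta_2/2$ in the exponent: the natural pair-match probability $\sum_i p_i^2 = 2^{-\Eta_2}$ gives ``$\Eta_2$ bits per matching pair,'' but the claimed bound amortizes this as only $\Eta_2/2$ bits per additional identical leaf, which requires either a Cauchy--Schwarz application to the matching-tree of the $m$ identical leaves (exploiting the strict inequality $(\sum_i p_i^k)^{1/k} < (\sum_i p_i^2)^{1/2}$ noted just before the lemma) or a careful analysis of the multiplicative structure $\prod_i k_i$ via a direct moment-generating-function computation on the Kesten spine.
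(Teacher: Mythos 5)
Your opening reduction via Markov's inequality is fine, and your spine factorization $\sigma(v)=\prod_{i=1}^{D}k_i$ is correct (and is a nice observation that the paper does not use). But the proof has a genuine gap at its quantitative core: the bound $\pr\{\sigma(v)\ge m\}\le 2^{-m\Eta_2/2+o(m)}$ \emph{is} the content of the lemma, and you do not establish it --- you yourself flag the constant $\Eta_2/2$ as ``the main obstacle'' and offer only two candidate strategies for obtaining it. Moreover, the tool you propose for transferring from the conditioned tree to a computable model cannot work here. Local weak convergence to Kesten's tree gives $\TV\big((T_n,k_n),(T_\infty,k_n)\big)=o(1)$, i.e.\ an additive $o(1)$ error on probabilities; but your first-moment step needs $\pr\{\sigma(V)\ge m\}=o(\log n/n)$ for a uniform leaf $V$, a polynomially small bound that an additive $o(1)$ error swamps completely. (The event $\{\sigma(v)\ge m\}$ is also not local: it constrains subtrees of size up to $\Theta(\log n)$ hanging off the entire ancestral path.)

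The paper sidesteps both problems by working directly with the preorder degree sequence: it passes to the least common ancestor $w$ of the equivalence class, notes that $k\ge 2$ identical sibling subtrees of size $s\ge x/k$ must hang off $w$, computes the probability that $k$ blocks of $s$ i.i.d.\ degrees coincide as $\big(\sum_i p_i^k\big)^s$, and transfers to the conditioned tree with an explicit $O(1)$ multiplicative factor via Dwass's cycle lemma together with Kolchin's local limit theorem --- a quantitative substitute for the total-variation coupling you invoke. The amortization you worry about (``$\Eta_2/2$ bits per extra identical leaf'') then falls out of the elementary strict inequality $\big(\sum_i p_i^k\big)^{1/k}<\big(\sum_i p_i^2\big)^{1/2}$ for $k>2$, which makes $k=2$ the dominant term; no Cauchy--Schwarz on a ``matching tree'' is needed. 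Finally, the hypothesis $\ex\{2^\xi\}<\infty$ is used to control $\sum_{k\ge 3}\sum_{d} p_d{d\choose k}$, not to give the size-biased law $\tilde\xi$ an exponential moment; indeed your parenthetical claim is false as stated, since $\ex\{2^\xi\}<\infty$ does not imply $\ex\{\xi\,2^\xi\}<\infty$ (take $p_i\propto i^{-2}2^{-i}$).
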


\begin{proof}
For $1\leq i\leq n$, let $\xi_i$ denote the degree of the $i$th node in preorder in the tree $T_n$.
For all $1\leq t< n$, the partial sum $\sum_{i=1}^t \xi_i > t-1$ and $\sum_{i=1}^n \xi_i = n-1$.
We will concentrate on the least common ancestor of the nodes in the largest equivalence
class of $T_n$. This node, call it $w$, has the property that the nodes in the equivalence class
belong to $k\geq 2$ different subtrees rooted at the children of $w$. The node $w$ has (random) degree $D$,
which we will deal with by summing over all possible degrees $d$.
Let $\A_{wk}$ denote the collection of all
subsets of size $k$ of the children of $w$ (naturally, this collection is empty if $w$ has fewer than $k$
children). For $x>0$, a node $w$,
integers $2\leq k\leq d$,
and a set $A\in \A_{wk}$, we let $E(x,w,k,A)$ be the event that all the nodes in $A$
are identical and their subtree sizes are at least $x/k$. Now for integers $s\geq x/k$,
we let $E'(x,k,d,s,A)$ be the event that a randomly chosen node $w$ of the tree $T_n$ has degree $d$ and
the leftmost $k$ children of $w$ are identical, with subtrees of size $s$. We have, by the union bound,
\[\pr\{S_n\geq x\} \leq \pr\!\bigg\{\bigcup_{w\in T_n} \bigcup_{k\geq 2}
\bigcup_{A\in \A_{wk}} E(x,w,k,A)\bigg\}\leq n\sum_{k\geq 2} \sum_{d\geq k} {d\choose k}\sum_{s\geq x/k}
\pr\!\big\{E'(x,k,d,s,A)\big\}.\label{upperSn}\]
Supposing that $w$ is the $j$th node in preorder, $E'(x,k,d,s,A)$ is the event that $\xi_j=d$,
$(\xi_{j+1},\ldots,\xi_{j+s})$ forms a tree, and $(\xi_{j+rs+1,\ldots,j+rs+s}) = (\xi_{j+1},\ldots,
\xi_{j+s})$ for all $1\leq r < k$. Let us say that an integer $j$ is ``good'' if these conditions hold when addition on the indices is done modulo $n$.
Clearly, there are more good $j$ than $j$ satisfying the above conditions. Let $G$ be the event that an index
$j$ chosen uniformly at random from $\{1,\ldots,n\}$ is ``good'';
let $B$ be the event that $(\xi_2,\ldots,\xi_s)$ forms a tree and
$(\xi_{rs+2},\ldots,\xi_{(r+1)s+1}) = (\xi_{j+1},\ldots, \xi_{j+s})$ for all $1\leq r <k$. By a rotational argument due to Dwass~\cite{dwass1969},
\begin{align}
\pr\big\{G\given (\xi_1,\ldots,\xi_n)\ \hbox{forms a tree}\big\} &= \pr\!\Big\{G \;\Big| \sum_{i=1}^n \xi_i = n-1\Big\} \cr
&= {\pr\!\big\{\xi_1 = d\comma B\comma \sum_{i=1}^n \xi_i = n-1\big\}\over \pr\!\big\{\sum_{i=1}^n \xi_i=n-1\big\}}\cr
&= {\pr\!\big\{\xi_1 = d\comma B\comma \sum_{i=\lfloor 1 + ks\rfloor + 1}^n\xi_i = (n-1)-d-k(s-1)\big\}\over
\pr\!\big\{\sum_{i=1}^n \xi_i=n-1\big\}},\cr
\end{align}
so letting
\[R={\pr\!\big\{\sum_{i=1}^{n-(1-ks)}\xi_i=\big(n-(1-ks)-1\big)+(k+1-d)\big\}\over\pr\{\sum_{i=1}^n\xi_i = n-1\}},\]
we have
\[\pr\big\{G\given (\xi_1,\ldots,\xi_n)\ \hbox{forms a tree}\big\} = p_d\pr\{B\}R.\]

Letting $\lambda = \gcd\{i : i \geq 1, p_i>0\}$, a lemma of Kolchin~\cite{kolchin1986} states that uniformly in $y$,
\[\pr\!\bigg\{\sum_{i=1}^n\xi_i = n-y\bigg\} =\begin{cases}
\lambda/\big(\sqrt{2\pi n}\sigma^2\big)\exp\big(\!\!-y^2/2n\sigma^2\big)+{o(1)/\sqrt n},
& \hbox{if $n\bmod\lambda=0$}; \\
0,& \hbox{if $n\bmod \lambda \neq 0$.}\cr\end{cases}
\]
As the $o(1)$ term does not depend on $y$, we find that
\[R = \sqrt{{n-1\over n-(1-ks)-1+(k+1-d)}}\exp\bigg(\!\!-{(1-ks+d-k)^2\over 2\big(n-(1-ks+d-k)\big)}
\sigma^2\bigg)+o(1),\]
where the $o(1)$ term depends only on $n$. Assuming that $ks+d\leq n/2$, we have $R\leq \sqrt2+o(1)$. Hence
\[\pr\big\{G\given (\xi_1,\ldots,\xi_n)\ \hbox{forms a tree}\big\} \leq \big(\sqrt 2 + o(1)\big)p_d\pr\{B\}\]
whenever $ks+d\leq n/2$. We now compute a bound on $\pr\{B\}$. We have
\[\pr\{B\given \xi_2,\ldots,\xi_{1+s}\} = (p_{\xi_2}\cdots p_{\xi_{1+s}})^{k-1}\]
and therefore, by independence of the $\xi_i$,
\begin{align}
\pr\{B\}&=\ex\big\{(p_{\xi_2}\cdots p_{\xi_{1+s}})^{k-1}\one_{[(\xi_2,\ldots,\xi_{1+s})\ \text{\small{forms a tree}}]}
\!\big\}\cr
&\leq \prod_{i=2}^{1+s} \ex\big\{(p_{\xi_i})^{k-1}\big\} = \Big(\sum_{i\geq 0}{p_i}^k\Big)^s.\cr
\end{align}

We can now combine all of these bounds. Substituting everything into \eqref{upperSn}, we have
\begin{align}
\pr\{S_n\geq x\} &\leq n\sum_{k\geq 2}\sum_{d\geq k}{d\choose k}\sum_{s\geq x/k} \big(\sqrt2+o(1)\big)p_d
\Big(\sum_{i\geq 0}{p_i}^k\Big)^s\cr
&\leq\big(\sqrt2+o(1)\big)n\sum_{k\geq 2}\sum_{d\geq k} p_d{d\choose k} \Big(\sum_{i\geq 0}{p_i}^k\Big)^{x/k}
{1\over {1-\sum_{i\geq 0} {p_i}^k}}. \cr
\end{align}
Since the inequality~\eqref{strictineq} was strict, there exists $0<\theta<1$ such that
\begin{align}
\pr\{S_n\geq x\} &\leq {\sqrt2+o(1)\over 1-\sum_{i\geq 0}{p_i}^2} \bigg( n\sum_{d\geq 2} p_d{d\choose 2}
  \Big(\sum_{i\geq 0}{p_i}^2\Big)^{x/2} + n\sum_{k\geq 3}\sum_{d\geq k} p_d{d\choose k}
  \Big(\sum_{i\geq 0}{p_i}^2\Big)^{x/2}\theta^x\bigg)\cr
&\leq {\sqrt2+o(1)\over 1-\sum_{i\geq 0}{p_1}^2} n (\sigma^2 + 1)\Big(\sum_{i\geq 0}{p_i}^2\Big)^{x/2}
  + n\Big(\sum_{i\geq 0}{p_i}^2\Big)^{x/2}\theta^x\sum_{k\geq 3}\sum_{d\leq k}p_d{d\choose k}.\cr
\end{align}
Since
\[\sum_{d\geq 2}p_d\sum_{k=3}^d{d\choose k} \leq \sum_{d\geq 3}p_d2^d\leq \ex\{2^\xi\},\]
we have
\[\pr\{S_n\geq x\}\leq n{\sqrt2(\sigma^2+1)\over 1-\sum_{i\geq 0}{p_i}^2}\Big(\sum_{i\geq 0}{p_i}^2\Big)^{x/2} \big(1+o(1)\big),
\]
provided that $\ex\{2^\xi\}<\infty$. Setting
\[x = (1+\eps){2\log_2n\over \log_2 \big(1/\sum_{i\geq 0} {p_i}^2\big)}= (1+\eps){2\log_2 n\over \Eta_2},\]
we find that $\pr\{S_n\geq x\}\to 0$ as $n\to\infty$.
\end{proof}
\goodbreak

The next lemma presents a lower bound for $S_n$.

\begin{lem}
\label{lowerbound}
Let $\xi$ be an offspring distribution with mean 1 and nonzero finite variance $\sigma^2$.
If $S_n$ is the multiplicity of a conditioned Bienaym\'e--Galton--Watson tree of size $n$ with offspring
distribution $\xi$, then
\[\pr\!\bigg\{S_n<(1-\eps){\log_2 n \over\log_2(1/\gamma)}\bigg\} \to 0 \]
for all $0<\eps<1$, where
$\gamma = \max_{k\geq 2} {p_0}^k {p_k}^{k/(k-1)}$.
\end{lem}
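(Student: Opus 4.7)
My approach is to exhibit a subtree shape $\tau_n$ whose occurrence as a fringe subtree in $T_n$ forces $S(T_n)\geq M_n$, where $M_n = (1-\eps)\log_2 n/\log_2(1/\gamma)$, and then to apply the second moment method to show that $\tau_n$ is present with probability tending to $1$. Let $k^*\geq 2$ attain the maximum in the definition of $\gamma$, so $\gamma = p_0^{k^*}p_{k^*}^{k^*/(k^*-1)}$; such $k^*$ exists because $p_0^k p_k^{k/(k-1)}\leq p_0^k\to 0$ and the variance condition on $\xi$ guarantees some $k\geq 2$ with $p_k>0$. Take $\tau_n$ to be the complete $k^*$-ary tree of depth $d_n = \lceil\log_{k^*} M_n\rceil$, which has $(k^*)^{d_n}\geq M_n$ leaves. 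All these leaves lie in a single equivalence class under $\equiv$: they share the same depth, and at each level $j\leq d_n$ of their shared ancestor paths the relevant subtree is the complete $k^*$-ary tree of depth $j$, independent of which leaf was chosen. Hence any $v\in T_n$ with $T_v\cong\tau_n$ witnesses $S(T_n)\geq M_n$.

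Let $N_n$ count such witness nodes. The Bienaym\'e--Galton--Watson weight of $\tau_n$ is $\mu(\tau_n) = p_{k^*}^{((k^*)^{d_n}-1)/(k^*-1)} p_0^{(k^*)^{d_n}}$, which by the identity $\gamma^{1/k^*} = p_0 p_{k^*}^{1/(k^*-1)}$ equals $\gamma^{(k^*)^{d_n-1}}$ up to the constant factor $p_{k^*}^{-1/(k^*-1)}$. Since $(k^*)^{d_n-1}\leq M_n$ by the choice of $d_n$, this is at least of order $\gamma^{M_n} = n^{-(1-\eps)}$. The Dwass rotation combined with Kolchin's local limit theorem, exactly as in the proof of Lemma~\ref{upperbound}, then yields $\ex\{N_n\}\sim n\mu(\tau_n)$, which diverges at least as fast as $n^\eps$.

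For the second moment I decompose $\ex\{N_n^2\} = \sum_{v,v'}\pr\{T_v\cong\tau_n,\ T_{v'}\cong\tau_n\}$ according to the relative position of $v$ and $v'$. Nested pairs $v\neq v'$ contribute zero: one of $T_v,T_{v'}$ would be a proper subtree of the other, forcing $|T_v|\neq|T_{v'}|$ and precluding both from being isomorphic to $\tau_n$. The diagonal contributes $\ex\{N_n\} = o(\ex\{N_n\}^2)$. The remaining disjoint pairs are handled by a two-point analogue of the Dwass--Kolchin estimate: cut out two disjoint copies of $\tau_n$ and invoke Kolchin's theorem on the leftover walk of length $n - 2|\tau_n|+2$. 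Because $|\tau_n| = O(\log n) = o(\sqrt n)$, the ratio of local-limit densities is $1+o(1)$ uniformly in the positions of $v$ and $v'$, so this contribution factorises to $\ex\{N_n\}^2(1+o(1))$. The Paley--Zygmund inequality then yields $\pr\{N_n\geq 1\}\geq\ex\{N_n\}^2/\ex\{N_n^2\}\to 1$, whence $\pr\{S_n\geq M_n\}\to 1$, as required.

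The main obstacle is the uniform two-point Kolchin estimate above. The single-pin version already appears in Lemma~\ref{upperbound}, but one must verify that pinning two sub-structures of total size $O(\log n)$ simultaneously still leaves the Gaussian factor equal to $1+o(1)$ uniformly in their positions. The remaining bookkeeping (absorbing the constants from Dwass's formula and handling the integer ceiling in $d_n$) is routine.
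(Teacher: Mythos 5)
Your argument is correct in outline, but it takes a genuinely different route from the paper's, and the route you choose is strictly heavier. You run a second-moment (Paley--Zygmund) argument on the number $N_n$ of fringe copies of the complete $k^*$-ary tree, which forces you to prove a \emph{two-point} Dwass--Kolchin estimate, uniformly over the positions of the two pinned blocks --- precisely the step you flag as ``the main obstacle'' and leave unverified. That estimate is true and standard (both blocks have total size $O(\log n)=o(\sqrt n)$, so the Gaussian factors in Kolchin's local limit theorem are $1+o(1)$ uniformly), so I would not call this a gap, but it is the hardest part of your proof. The paper sidesteps the second moment entirely: it plants the candidate copies only at the disjoint index blocks $1,\,1+z,\,1+2z,\ldots$ of the depth-first walk, where the events $Y_i$ are genuinely independent under the \emph{unconditioned} law of $(\xi_1,\ldots,\xi_n)$, bounds $\pr\{\bigcap_i Y_i^c\}=(1-q)^{\lfloor (n-z)/z\rfloor}\leq\exp(-\Omega(n^{\eps}/\log n))$, and then removes the conditioning on the walk forming a tree by paying only the crude polynomial factor $\Theta(n^{3/2})$ from the cycle lemma, which the exponential swamps. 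This costs a factor of $z=\Theta(\log n)$ in the number of trials, which is immaterial, and requires no local limit theorem at all for the lower bound. Your combinatorial setup (optimal $k^*$, the identity $\gamma^{1/k^*}=p_0p_{k^*}^{1/(k^*-1)}$, the bound $\mu(\tau_n)=\Omega(\gamma^{M_n})=\Omega(n^{-(1-\eps)})$, hence $\ex\{N_n\}=\Omega(n^{\eps})$) matches the paper's bookkeeping exactly; the payoff of your extra work, if you carry out the two-point estimate, is the stronger conclusion that $N_n$ concentrates around $n\mu(\tau_n)$, which the paper's argument does not give.
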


\begin{proof}
Consider a complete $k$-ary tree of height $L$. This tree has $k^L$ leaves and $1+k+\cdots+k^{L-1} = (k^L-1)/(k-1)$ internal nodes, all of degree $k$. The probability that an unconditioned Bienaym\'e--Galton--Watson tree takes this shape is
\[{p_0}^{k^L}{p_k}^{(k^L-1)/(k-1)};\]
call this probability $q$.
For any real number $x$, the statement $S_n<x$ implies that no node in the tree can have the given $k$-ary tree as a subtree for any $k^L\geq x$, as the multiplicity of the $k$-ary tree is $k^L$. Fix $k\geq 2$ for now, let $L$ be the first integer for which $k\geq x$, and let $y = k^L$. Observe that $y\leq kx$. Denote the size of the $k$-ary tree by $z = y+(y-1)/(k-1)$.

\begin{figure}
\begin{center}
  \subfigure{\includegraphics{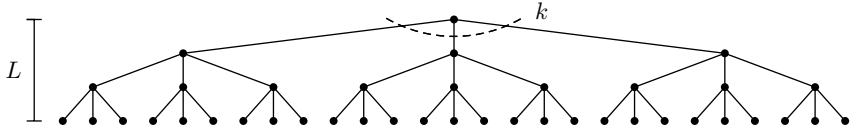}}
  \caption{The construction in the proof of Lemma~\ref{lowerbound}. In this example, both $k$ and $L$ are equal to $3$.}
  \label{fig:embedded}
\end{center}
\end{figure}

We now consider the indices $1, 1+z, 1+2z, 1+3z,\ldots$ in
$\{1,\ldots,n-z\}$. Let $Y_i$ be the event (and ${Y_i}^c$ its complement) that  $(\xi_i,\ldots, \xi_{i+z-1})$ defines precisely the $k$-ary tree, where $i$ is in the set of indices defined above, which has size $\lfloor(n-z)/z\rfloor$. Note that
\[\pr\{S_n<x\}\leq \pr\{S_n< y\} = \pr\bigg\{\bigcap_{i=1}^{n-z} {Y_i}^c\,\Big|\, (\xi_1,\ldots,\xi_n)\ \hbox{defines a tree}\bigg\}.\]
By Dwass' cycle lemma~\cite{dwass1969}, the probability that $(\xi_1,\ldots,\xi_n)$ defines a tree is $\Theta(n^{3/2})$, so
\begin{align}
\pr\{S_n < x\} &\leq \Theta(n^{3/2}) \pr\bigg\{\bigcap_{i=1}^{n-z} {Y_i}^c\bigg\} \cr
&= \Theta(n^{3/2}) \pr\{{Y_i}^c \}^{\lfloor(n-z)/z\rfloor} \cr
&= \Theta(n^{3/2}) (1-q)^{\lfloor(n-z)/z\rfloor} \cr
&\leq \Theta(n^{3/2}) \exp\!\bigg(\!\!-\! \bigg\lfloor {n-z\over z}\bigg\rfloor {p_0}^y {p_k}^{(y-1)/(k-1)}\bigg) \cr
&\leq \Theta(n^{3/2}) \exp\!\bigg(\!\!-\! \bigg\lfloor {n-z\over z}\bigg\rfloor {p_0}^{kx} {p_k}^{(kx-1)/(k-1)}\bigg) \cr
&\leq \Theta(n^{3/2}) \exp\!\bigg(\!\!-\!  \Omega(1)\bigg\lfloor {n-z\over z}\bigg\rfloor \big({p_0}^{k} {p_k}^{(k-1)/(k-1)}\big)^x\bigg) \cr
&\leq \Theta(n^{3/2}) \exp\!\bigg(\!\!-\!  \Omega(1)\bigg\lfloor {n-z\over z}\bigg\rfloor \gamma^x\bigg).\cr
\end{align}
Substituting $(1-\eps)\log_2 n/\log_2(1/\gamma)$ for $x$, and noting that $z = \Theta(\log n)$, we observe that this bound tends to $0$.
\end{proof}

\section{The maximal leaf-degree}
\label{sec:maxleafdeg}

Let $T_n$ be a random critical Bienaym\'e--Galton--Watson tree of size $n$. We let $\xi_u$ be the degree of the node $u$ and let $\lambda_u$ be the number of children of $u$ that are leaves in $T_n$, i.e., the {\it leaf-degree} of $u$. We denote by $L_n$ the random variable $\max_{u\in T_n} \lambda_u$; it is clear that the multiplicity $S_n$ satisfies
$M_n\geq L_n$. The next lemma shows that when the tail of the offspring distribution $\xi$ decays at a rate slower than exponential,
the ratio $L_n/\log n\to \infty$ in probability.
So while our condition in the upper bound that $\ex\{2^\xi\}$ be finite might have seemed somewhat artificial at first glance, we essentially cannot do without it.

\begin{lem}
\label{maxleafdeg}
Let $\ex\{\xi\}=1$, $\var\{\xi\} = \sigma^2\in (0,\infty)$, and suppose that $\ex\{\rho^\xi\}=\infty$ for every $1 < \rho<\infty$. Let $L_n$ be the maximal leaf-degree in $T_n$, the Bienaym\'e--Galton--Watson tree induced by $\xi$, of size $n$. Then
\[{L_n\over \log n}\to \infty\]
in probability along a subsequence, as $n\to \infty$.
\end{lem}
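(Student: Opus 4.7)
The hypothesis $\ex\{\rho^\xi\}=\infty$ for every $\rho>1$ is equivalent to $\limsup_{k\to\infty}p_k^{1/k}=1$, so one can extract a subsequence $k_j\to\infty$ with $p_{k_j}=e^{-\epsilon_j k_j}$ and $\epsilon_j\to 0$. The plan is to choose $n_j$ so that $\log n_j$ is genuinely intermediate between $\epsilon_j k_j$ and $k_j$; for instance $\log n_j=\lceil k_j\sqrt{\epsilon_j}\rceil$ gives simultaneously $k_j/\log n_j=1/\sqrt{\epsilon_j}\to\infty$ and $n_j p_{k_j}=\exp(k_j(\sqrt{\epsilon_j}-\epsilon_j))\to\infty$. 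I will then argue that with probability tending to $1$, $T_{n_j}$ contains a node of degree exactly $k_j$ whose leaf-degree is at least $m_j=\lceil k_j p_0/2\rceil$; this forces $L_{n_j}\geq m_j$ and hence $L_{n_j}/\log n_j\geq p_0/(2\sqrt{\epsilon_j})\to\infty$.

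For the first ingredient---that $T_{n_j}$ contains a node of degree at least $k_j$---I would use the cycle-lemma framework of Lemma~\ref{lowerbound}. The event $B=\{\max_i\xi_i<k_j\}$ is invariant under cyclic rotations, so by Dwass $\pr\{B\given T_n\}=\pr\{B\given\sum_i\xi_i=n-1\}$; comparing the local CLTs of $\xi$ and of $\xi$ restricted to $\{\xi<k_j\}$ (Kolchin's lemma, as in the proof of Lemma~\ref{upperbound}) yields $\pr\{B\given T_n\}\leq C(1-p_{k_j})^n\leq C e^{-n p_{k_j}}$, which vanishes since $n_j p_{k_j}\to\infty$.

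The second and more delicate ingredient is that, given a node $u$ of degree $k_j$, the leaf-degree $\lambda_u$ concentrates near $k_j p_0$. In an unconditioned Bienaym\'e--Galton--Watson tree the child subtree sizes $s_1,\ldots,s_{k_j}$ are iid with $\pr\{s=1\}=p_0$, so $\lambda_u\sim\Bin(k_j,p_0)$ and $\lambda_u\geq k_j p_0/2$ with probability $1-e^{-\Omega(k_j)}$ by Chernoff. In the conditioned tree the $s_l$ are exchangeable with fixed total, but the cycle lemma for forests combined with Kolchin's local CLT yields $\pr\{s_l=1\given\sum_r s_r=S\}=p_0+o(1)$ marginally, with only $O(1/k_j)$ pairwise correlations, so Chebyshev delivers the same concentration. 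The cleanest packaging is a second-moment argument applied to $M_j=\sum_{i=1}^{n_j}\ind{\xi_i=k_j,\,\lambda_i\geq m_j}$: one shows $\ex\{M_j\given T_{n_j}\}\geq c\, n_j p_{k_j}\to\infty$ while $\var\{M_j\given T_{n_j}\}=o(\ex\{M_j\}^2)$, so by Paley--Zygmund $M_j\geq 1$ with probability tending to $1$.

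The hard part is that one cannot simply mimic the planted-star argument of Lemma~\ref{lowerbound}: specifying that $m_j$ particular preorder positions take the value $0$ incurs an extra factor $p_0^{m_j}$, which combined with the cost $p_{k_j}=e^{-\epsilon_j k_j}$ of planting a degree-$k_j$ parent forces $m_j\leq\log n_j/\log(1/p_0)=O(\log n_j)$. One must instead exploit that a high-degree node \emph{automatically} inherits $\approx k_j p_0$ leaf-children from the branching mechanism, without prescribing which of its $k_j$ children are leaves; this is precisely what demands the local-CLT analysis of the conditioned forest of child subtrees described above.
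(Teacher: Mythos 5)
Your proposal is correct in outline but takes a genuinely different route from the paper. The paper couples $T_n$ with Kesten's infinite tree $T_\infty$ (truncated at depth $k_n=\lceil n^{1/3}\rceil+1$, via the total-variation convergence of $(T_n,k_n)$ to $(T_\infty,k_n)$) and looks only at the $\approx n^{1/3}$ spine nodes: their degrees are i.i.d.\ with the size-biased law $\pr\{\zeta=i\}=ip_i$, each spine node's leaf-degree is $\Bin(\zeta-1,p_0)$, and the hypothesis is converted into $\pr\{\xi>\ell/\log_2\rho\}\geq\ell^{-2}2^{-\ell}$ for infinitely many $\ell$, with $\rho$ taken close enough to $1$ that $n^{1/3}\pr\{\zeta>(2c/p_0)\log n\}\to\infty$ along a subsequence. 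The independence of the spine degrees is what makes that argument short: no second moment is needed. You instead work with all $n$ nodes inside the cycle-lemma/local-CLT framework already used in Lemmas 1 and 2, extracting $p_{k_j}=e^{-\eps_jk_j}$ with $\eps_j\to 0$ and tuning $\log n_j\approx k_j\sqrt{\eps_j}$ so that $n_jp_{k_j}\to\infty$ while $k_j/\log n_j\to\infty$; this is a clean and valid way to exploit the hypothesis (note that $\eps_jk_j=\log(1/p_{k_j})\to\infty$ forces $k_j\sqrt{\eps_j}\to\infty$ and $k_j=O((\log n_j)^2)=o(\sqrt{n_j})$, so the local-limit comparisons are legitimate). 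You also correctly identify, and correctly circumvent, why the planted-pattern device of Lemma~\ref{lowerbound} cannot be reused: the degree-$k_j$ node must acquire its $\approx k_jp_0$ leaf-children from the branching mechanism rather than by prescription. What your route buys is that it stays entirely within the machinery of Sections 1--2 and avoids importing the local limit theorem for Kesten's tree; what the paper's route buys is the complete avoidance of any variance computation.

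Two places in your sketch carry essentially all the technical weight and should be written out. First, the concentration of $\lambda_u$ requires control of the total size $S$ of the child-subtree forest under the conditioned law; for $S\asymp k_j^2$ (which occurs with probability $\Theta(1)$ given degree $k_j$) the cycle lemma for forests gives $\pr\{s_l=1\}=p_0(1-1/k_j)(1+o(1))$ with pairwise covariances that are in fact $O(1/k_j^2)$, so Chebyshev works, but you should truncate (say to $S\leq k_j^3$) both here and in the second moment. Second, Paley--Zygmund with merely $\ex\{M_j^2\}\leq C\,\ex\{M_j\}^2$ gives only $\liminf\pr\{M_j\geq 1\}\geq 1/C$, not probability tending to $1$; you genuinely need $\var\{M_j\}=o(\ex\{M_j\}^2)$, which requires the non-nested pairs to contribute $(1+o(1))\ex\{M_j\}^2$ (available from the local CLT once subtree sizes are truncated to $o(n_j)$) and the ancestor--descendant pairs to contribute $O(\ex\{M_j\}\sqrt{n_j})$, which is $o(\ex\{M_j\}^2)$ precisely because $p_{k_j}=n_j^{-\sqrt{\eps_j}}\gg n_j^{-1/2}$. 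With these details supplied the argument goes through.
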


The proof of this lemma uses Kesten's limit tree~\cite{kesten1986} for the offspring distribution $\xi$, whose construction we briefly recall here (see also~\cite{lyonsperes}).
 Kesten's infinite tree, denoted $T_\infty$, is obtained by iterating the following step. Let the root of $T_\infty$ be marked. A marked node has $\zeta$ children, where $\pr\{\zeta=i\} = ip_i$ and $p_i = \pr\{\xi=i\}$. Observe that $\zeta \geq 1$ and $\ex\{\zeta\} = \ex\{\xi^2\} = \sigma^2+1$. Of these $\zeta$ children, a random child is marked and all others are unmarked. The unmarked nodes are roots of independent (unconditioned) Bienaym\'e--Galton--Watson trees. The procedure is then repeated for the sole marked node.

\begin{proof}
We argue by coupling $T_n$ with $T_\infty$. Let $(T_n,k)$ and $(T_\infty,k)$ denote the truncations of $T_n$ and $T_\infty$, respectively, to the nodes at distance $\leq k$ from the root. Then, denoting the total variation distance by $\TV$, it is known that
\[\TV\big((T_n,k_n), (T_\infty, k_n)\big) = o(1)\]
if the sequence $(k_n)$ is $o(\sqrt n)$ (see, e.g.,~\cite{kersting1998} and~\cite{stufler2019}). We couple $(T_n, k_n)$ and $(T_\infty, k_n)$
such that
\[\pr\!\big\{(T_n,k_n) \neq (T_\infty,k_n)\big\} =\TV\big((T_n,k_n), (T_\infty,k_n)\big)\to 0.\]
To show that $L_n/\log n\to \infty$ in probability, it suffices to show
this for $L_n'$, the maximal leaf-degree among all marked nodes of $(T_\infty,k_n)$ at distance $<k_n$ from the root. Let $\zeta_0,\zeta_1,\ldots,\zeta_{k_n-1}$ be the degrees of the marked nodes in $(T_\infty, k_n)$, indexed by their distance from the root, let $\lambda_i$ be the leaf-degree corresponding to $\zeta_i$. Now, fix a constant $c$ and let $A_i$ be the event that $\lambda_i\leq c\log n$; we have
\begin{align}
\pr\{L_n'\leq c\log n\} &\leq \pr\!\bigg\{ \bigcap_{i=0}^{k_n-1} A_i\bigg\} \cr
&= \pr\{A_0\}^{k_n-1} \cr
&\leq \exp\!\big(\!-\!(k_n-1) \pr\{\lambda_0 > c\log n\}\big). \cr
\end{align}
Setting $k_n = \lceil n^{1/3}\rceil + 1$, we have
\[\pr\{L_n' \leq c\log n\}
\leq \exp\!\big(\!-n^{1/3} \pr\{\lambda_0 > c\log n\}\big).\]
Note that $\lambda_0\sim \Bin(\zeta_0 - 1,p_0)$, so that $\pr\{\lambda_0 \leq p_0\zeta_0/2 \given \zeta_0\}\leq 1/2$ for $\zeta_0$ large enough, by the law of large numbers. Therefore, for $n$ large enough, we have
\[\pr\{\lambda_0 > c\log n\}\geq \pr\!\big\{\lambda_0 \geq {p_0\zeta_0\over 2} > c\log n\big\} \geq {1\over 2} \pr\!\bigg\{\zeta_0 > {2c\over p_0}\log n\bigg\}.\]
To conclude the proof, we must show that $n^{1/3}\pr\{\zeta_0 > 2c\log n/p_0\}\to \infty$ along a subsequence of $n$. Note that if $\ex\{\rho^\xi\} = \infty$, then $\int_0^\infty \pr\{\rho^\xi > x\}\d x = \infty$, and thus
\[\sum_{\ell=1}^\infty 2^\ell \pr\!\bigg\{\xi>{\ell\over \log_2\rho}\bigg\} \geq \sum_{\ell=1}^\infty 2^\ell \pr\{\rho^\xi>2^\ell\} \geq \sum_{\ell=1}^\infty \int_{2^\ell}^{2^{\ell+1}} \pr\{\rho^\xi > x\}\d x = \infty,\]
and consequently, $\pr\{\xi > \ell/\log_2\rho\} \geq \ell^{-2}2^{-\ell}$ for infinitely many $\ell \in \N$. As
\[\pr\!\bigg\{\zeta > {\ell\over\log_2\rho}\bigg\} \geq {\ell\over \log_2\rho} \pr\!\bigg\{\xi > {\ell\over\log_2\rho}\bigg\},\]
we see that
\[\pr\!\bigg\{\zeta > {\ell \over \log_2\rho}\bigg\} \geq {1\over \log_2\rho\cdot\ell 2^\ell}\]
for infinitely many $\ell$. Setting $\ell = (2c/p_0)\log n \log_2 \rho$, we have,
\[n^{1/3} \pr\!\bigg\{\zeta > {2c\over p_0}\log n\bigg\}\geq n^{1/3}\cdot
{1\over 2^{2c\log n\log_2\rho/p_0}}\cdot {1\over \log_2\rho \cdot 2c\log n\log_2\rho/p_0}\]
for infinitely many $n$ provided that
\[{2c\over p_0} \log 2 \log_2\rho \leq {1\over 6},\]
which is possible by making $\rho > 1$ small enough. Thus, for every
$c>0$,
\[\limsup_{n\to \infty} \pr\{L_n' > c\log n\} = 1,\]
which is what we wanted to show.
\end{proof}

Note that if for every $\rho < 1$, $p_n > \rho^n$ for all $n$ large enough, then $L_n/\log n\to \infty$ in probability (instead of just along a subsequence).

\section{Examples}
\label{sec:examples}

There exists an important link between certain offspring distributions of conditioned Bienaym\'e--Galton--Watson trees and families of ``simply-generated trees''~\cite{meirmoon1978}. In this section we examine several important families of trees in the Bienaym\'e--Galton--Watson context, and give explicit asymptotic upper and lower bounds for the multiplicity. In each case,
the two important parameters will be
\[\gamma = \max_{k \geq 2}{p_0}^k {p_k}^{k/(k-1)}\qquad\hbox{and}\qquad\Eta_2 = \log_2{1\over \sum_{i\geq 0} {p_i}^2}.\]
We must also verify that $\ex\{2^\xi\}$ is finite,  if the upper bound is to hold. In particular, this latter condition always holds if $\xi$ is bounded. A summary of this section is displayed in Table 1.

\subsection{Full binary trees}

These are trees in which every node must have exactly zero or two children, and arise from the distribution $p_0 = p_2 = 1/2$. We compute $\gamma=1/16$ and $\Eta_2 = 1$, so that
\[(1-\eps){\log_2 n\over 4} \leq S_n \leq (1+\eps)2\log_2 n\]
asymptotically in probability. Because the multiplicity in a full binary tree must be a power of $2$, in essence this means that there exists a sequence of integers $(a_n)$ such that
\[\pr\!\big\{S_n \in \{2^{a_n}, 2^{a_n+1}, 2^{a_n+2}, 2^{a_n+3}\}\big\}\to 1.\]
In other words, in general one cannot improve the ratio between the upper and lower bounds in
Theorem~\ref{mainthm} to a factor of less than $8+\eps$.

\subsection{Flajolet t-ary trees}
Full binary trees are a special case of a
Flajolet $t$-ary tree for $t=2$
(see~\cite{flajoletsedgewick}, p.~68). In general, these are trees whose non-leaf nodes each have $t$ children, and they arise from the finite distribution $p_0 = (t-1)/t$, $p_t = 1/t$. We have
\begin{align}
\gamma &= {p_0}^t{p_t}^{t/(t-1)} \cr
&=\bigg(1-{1\over t}\bigg)^{t} \bigg({1\over t}\bigg)^{t/(t-1)} \cr
&=\exp\!\big(\!\!-1 + o_t(1) - \log t\big),
\end{align}
so $\log_2(1/\gamma) = \log_2 e + \log_2 t + o(1)$ as $t\to \infty$. On the other hand,
\[\Eta_2 = \log_2 {1\over {p_0}^2 + {p_t}^2} = \log_2{1\over
1-2/t + 2/t^2},\]
so $\Eta_2 \sim 2\log_2 e/t$ as $t\to \infty$. This means that as $t$ gets large, the ratio between the upper and lower bound grows as $t \log t$.
\begin{figure}
\begin{equation*}\vcenter{\hspace{-20.86215pt}\vbox{
\footnotesize
\tabskip=.2em plus2em minus.6em
    \halign{
        \hfil$\displaystyle{#}$\hfil & \hfil$\displaystyle{#}$\hfil & \hfil$\displaystyle{#}$\hfil &
        \hfil$\displaystyle{#}$\hfil & \hfil$\displaystyle{#}$\hfil \cr
        \noalign{\hrule}
        \noalign{\medskip}
        \hbox{Family} & \gamma & \Eta_2 & \hbox{Lower bound} & \hbox{Upper bound} \cr
        \noalign{\medskip}
        \noalign{\hrule}
        \noalign{\medskip}
        {\hbox{Full binary} \atop (\Uni\{0,2\})}  &{1\over16} & 1 & {\log_2 n \over 4} & 2\log_2 n \cr
        \noalign{\medskip}
        {\hbox{Flajolet $t$-ary}\atop (p_0 = 1-1/t;\;p_t= 1/t)}  & e^{-1 + o_{t\to\infty}(1) - \log t} & \log_2{1\over 1-2/t-t/t^2} & {\log_2 n\over \log_2 e + \log_2 t + o_{t\to\infty}(1)} & \sim_{t\to\infty} t\log  n \cr
        \noalign{\medskip}
        {\hbox {Cayley} \atop (\Pos(1))} & {1\over 4e^4} & \log_2\Big({e^2 \over I_0(2)}\Big) & {\log_2 n\over 2+4\log_2 e} & {2\log_2 n\over \log_2(e^2 / (I_0(2))} \cr
        \noalign{\medskip}
        {\hbox{Catalan} \atop (\Bin(2,1/2))}   & {1\over 256} & \log_2(8/3) & \log_{256} n & {2\log_2 n\over \log_2(8/3)}\cr
        \noalign{\medskip}
        {\hbox{Binomial} \atop (\Bin(d,1/d))}  & {1 \over 4} {\Big(1-{1 \over d}\Big)}^{4d-2} & \log_2\Big({e^2 \over I_0(2)}\Big)+o_{d\to\infty}(1) & {\log_2 n \over 2 - \log_2({(1-1/d)}^{4d-2})} & {2 \log_2 n \over \log_2(e^2 / (I_0(2))+o_{d\to\infty}(1)} \cr
        \noalign{\medskip}
        {\hbox{Motzkin} \atop (\Uni\{0,1,2\})} &  {1\over 81} & \log_2 3 & \log_{81} n & 2\log_3 n \cr
        \noalign{\medskip}
        {\hbox{Planted plane} \atop (\Geo(1/2))} &  {1\over 256} & \hbox{---} & \log_{256} n & \hbox{---} \cr
        \noalign{\medskip}
        \noalign{\hrule}
    }
}}\end{equation*}
\begin{equation*}\vcenter{\vbox{
\centerline{{\bf Table 1.}\enspace
Leaf multiplicities of certain families of trees}
}}\end{equation*}
\end{figure}

\subsection{Cayley trees}

These trees arise from a $\Pos(1)$ distribution, where $p_i = 1/(e\cdot i!)$ for $i\geq 0$. We verify first that
\[\ex\{2^\xi\} = \sum_{i=0}^\infty {2^i\over e i!} = e < \infty,\]
and then work out that $\gamma = 1/(4e^4)$. Letting
\[I_0(z) = \sum_{i=0}^\infty {(i^2/4)^k\over i!\cdot\Gamma(z+1)} = {1\over \pi}\int_0^\pi e^{z\cos \theta} \d\theta\]
be the modified Bessel function of the first kind (see~\cite{abrasteg1972}, p.~376), we find that
\[\sum_{i=0}^\infty {p_i}^2 = {1\over e^2} \sum_{i=0}^\infty {1\over (i!)^2} = {1\over e^2}I_0(2),\]
meaning that $\Eta_2 = 2\log_2 e - \log_2\!\big(I_0(2)\big)$. Putting everything together, the lower and upper bounds in probability for $S_n$ are, respectively,
\[{\log_2 n\over 2+4\log_2 e} \approx {\log_2 n\over 7.771}
\qquad\hbox{and}\qquad {2\log_2 n\over 2\log_2 e - \log_2\big((1/\pi)\int_0^\pi e^{2\cos \theta}\d \theta\big)} \approx {\log_2 n\over 0.8483}.\]

\subsection{Catalan trees}

When we set $p_0 = p_2 = 1/4$ and $p_1 = 1/2$, we obtain a family of trees often called Catalan trees, since the number of such trees on $n$ nodes is ${2n\choose n}/(n+1)$. There is a one-to-one correspondence between Catalan trees on $n$ nodes and full binary trees on $2n+1$ nodes, since one obtains a full binary tree from a Catalan tree by adding artificial external nodes to every empty slot, and this procedure is reversed by removing all leaves from a full binary tree. It is easy to see that the leaf multiplicity of a full binary tree is exactly double the multiplicity of its corresponding Catalan tree. By plugging in $d=2$ above, the lower bound given by
Lemma~\ref{lowerbound} is $\log_2 n / 8$, which makes sense since the correspondence with full binary trees tells us that the lower bound on the Catalan trees should be similar to $\log_2(2n+1)/8$. We calculate
$\Eta_2 = \log_2(8/3)$ and the upper bound
is $2\log_2 n/\log_2(8/3)$, so
the ratio between the upper and lower bounds is $16/\log_2(8/3)$.

\subsection{Binomial trees}

Catalan trees are a special case of a binomial tree. For integer parameter $d\geq 2$, nodes in these trees have $d$ ``slots'' that may or may not contain a child; so there are ${d\choose i}$ ways for a node to have $i$ children, for $0\leq i\leq d$. These trees correspond to a $\Bin(d, 1/d)$ distribution. We compute
\[\gamma = (p_0 p_2)^2 = \Bigg(\bigg({d-1 \over d}\bigg)^{d}\cdot {d(d-1) \over 2}\cdot {(d-1)^{d-2} \over d^{d}}\Bigg)^2 = {1 \over 4} \bigg(1-{1 \over d}\bigg)^{4d-2}.\]
Note that taking the limit $d \to \infty$, the $\Bin(d, 1/d)$ distributions approach a $\Pos(1)$ distribution. Thus we see from our earlier discussion on the Cayley trees that $\Eta_2 = \log_2\big(e^2 / (I_0(2)\big)+o_{d\to\infty}(1)$. This gives the respective lower and upper bounds
\[{\log_2 n \over 2 - (4d-2)\log_2(1-1/d)} \qquad\hbox{ and }\qquad {2 \log_2 n \over \log_2\big(e^2 / (I_0(2)\big)+o_{d\to\infty}(1)}.\]
The lower bound tends to $\log_2n / (2 + 4\log_2 e)$ as $d \to \infty$, matching the lower bound we obtained for Cayley trees above.

\subsection{Motzkin trees}

Also known as unary-binary trees, these are trees in which each non-leaf node can have either one or two children. They correspond to the distribution with $p_0 = p_1 = p_2 = 1/3$. We easily compute $\gamma = 1/81$ and $\Eta_2 = \log_2 3$, which yields an asymptotic lower bound of $\log_2 n/(\log_2 81)= \log_{81} n$ and an asymptotic upper bound of $2\log_2 n /\log_2 3 = 2 \log_3 n$. The ratio between the upper and lower bounds is $8$.

\subsection{Planted plane trees}

These are trees with ordered children, so that each can be embedded in the plane in a unique way. They correspond to a $\Geo(1/2)$ distribution, with $p_i = 1/2^{i+1}$ for all $i$. We find that $\gamma = 1/256$, so we have the asymptotic lower bound $S_n \geq \log_2 n/ 8$. 
Unfortunately, we have $\ex\{2^\xi\} = \sum_{i\geq 0} 1/2 = \infty$, so Lemma~\ref{upperbound} cannot be applied to give an upper bound here. We note that the maximal degree $\Delta_n$ of $T_n$ satisfies $\Delta_n/\log_2 n\to 1$ in probability (see, e.g., \cite{rootest2020}, Lemma 6). However, this does not imply that $S_n = O(\log n)$ in probability.

\section{Automorphic multiplicity}

\label{sec:automult}

The multiplicity of a tree does not have a natural extension to unrooted trees,
because whether or not two nodes are identical depends crucially on their position in relation to a
distinguished root node $u$. In this section we briefly investigate an alternate notion of multiplicity
that does extend nicely to free trees. It arises in the problem of root estimation
in Galton--Watson trees described in~\cite{rootest2020}. We briefly recall some definitions.
Let $T$ be a rooted tree. By disregarding the parent-child directions of the edges, we obtain a free tree
$T_\F$. Conversely, if we start with a free tree $T_\F$ and any node $u$, we can define a
{\it rooting of $T_\F$ at $u$}
to be the rooted tree $T_u$ obtained by fixing $u$ as the root.
This does not give rise to a unique tree in general, because children of a given node may
hang on the wall in an arbitrary left-to-right order, but our new notion of multiplicity will treat all
of these possible ordered trees the same.

Let $\Aut(T_\F)$ be the group of all graph automorphisms of $T_\F$, that is, bijections $f$ from the
set of vertices $T_\F$
to itself such that for vertices $u$ and $v$, $f(u)$ is adjacent to $f(v)$ whenever $u$ is adjacent to $v$.
We can then define an {\it automorphism} of $T_u$ to be a graph automorphism of $T_\F$ such that
the root $u$ stays fixed. By a slight abuse of notation,
we denote the set of these rooted-tree automorphisms by $\Aut(T_u)$;
formally this is the {\it stabilizer subgroup}
\[\Stab(u) = \{ g\in \Aut(T_\F) : g\cdot u = u\}\]
of $\Aut(T_\F)$.
We will say that two nodes $v$ and $w$ in $T_u$ are {\it congruent} and write $v\sim_u w$
if $v$ and $w$ belong to the same orbit under the action of $\Aut(T_u)$. This means that there exists
an element $f$ of $\Aut(T_u)$ such that $f(v) = w$. It is clear that this gives us an equivalence
relation on the set of all nodes of $T_u$,
and the {\it automorphic multiplicity} of a node $v$, denoted $\mu(u,v)$,
is the size of the equivalence class of $v$ under this relation. Since any node can be mapped to
itself under an automorphism, $\mu(u,v)\geq 1$ for all $v$.

In fact, one can define the relation 
$\sim_u$, and consequently the function $\mu$,
purely in terms of the relation $\equiv$.
We have $v\sim_u w$ if and only if there exists a permutation
for every node in $T_u$ such that applying each permutation to the left-to-right ordering of its
respective node's children results in a tree in which $v\equiv w$. The analogue of $S$ in this setting
is the {\it automorphic (leaf) multiplicity} $M(T)$ of a rooted tree $T$. If $o$ is the root of the tree $T$,
then $M(T)$ is the maximum value of $\mu(o,v)$ over all
nodes $v$ in the tree $T$.

\begin{figure}
\begin{center}
  \subfigure{\includegraphics{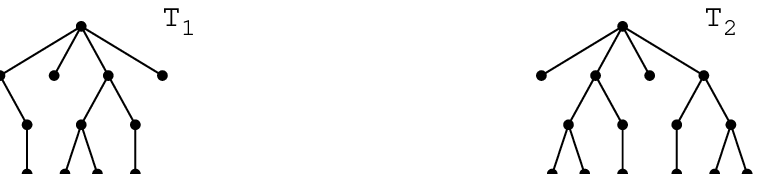}}
  \caption{Different leaf multiplicities but the same automorphic leaf multiplicity.}
  \label{fig:autodef}
\end{center}
\end{figure}

Fig.~\ref{fig:autodef} illustrates the distinction between the automorphic and non-auto\-mor\-phic multiplicity.
We have $S(T_1) = M(T_1) = 4$, since the two non-leaf children of the root have identical
(and therefore congruent) subtrees. In $T_2$, on the other hand, these subtrees are congruent but not
identical, so that $M(T_2) = 4$ but the non-automorphic multiplicity of $T_2$ is only $2$.

This definition is still somewhat at odds with the notion of multiplicity that arises in the
root estimation problem from~\cite{rootest2020}.
In that setting, one considers all graph
automorphisms of the free tree, not just ones that fix the root. We will call the size of the orbit of a node
under this larger action the {\it free multiplicity} $\mu_\F$ and if two nodes $u$ and $v$ are congruent under
an arbitrary graph automorphism, then we write $u\sim_\F v$ and say that the two nodes are {\it free-congruent}.
We also let $M_\F(T)$ denote the {\it free (leaf) multiplicity}, the maximum value of $\mu_\F$
over all nodes in the free tree $T_\F$.

\begin{figure}
\begin{center}
  \subfigure{\includegraphics{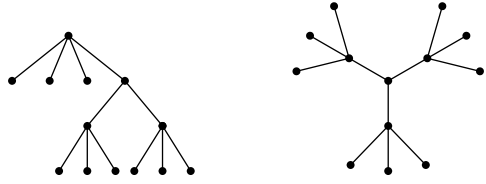}}
  \caption{A rooted tree $T$ with $M(T)=6$ and $M_\F(T)=9$.}
  \label{fig:freecong}
\end{center}
\end{figure}

Fig.~\ref{fig:freecong} shows the relation between the automorphic multiplicity of a rooted tree and the
free multiplicity its free-tree counterpart. Note that $M(T) \leq M_\F(T)$ for any rooted tree $T$,
since we have $\mu(u) \leq \mu_\F(u)$ for every node $u$.
We shall spend the rest of this section showing that this inequality can more or less be reversed. First,
we need three lemmas, and in their statements and proofs, we shall understand ``multiplicity'' to mean
``free multiplicity''. In the following proof, we also write $[G:H]$ to denote the {\it index} of a subgroup $H$
in a larger group $G$; that is, the cardinality of the coset space $G/H$.

\begin{lem}
\label{sakslemma}
If $u$ and $v$ are adjacent nodes in a finite free tree $T$, then
either $\mu_\F(u)$ is an integer multiple of $\mu_\F(v)$ or the other way around.
\end{lem}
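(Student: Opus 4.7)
The plan is to apply the orbit-stabilizer theorem to the natural action of $G=\Aut(T)$ on the vertex set, then use a simple size argument on the two components obtained by deleting the edge $uv$.

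Write $H=\Stab(u)\cap\Stab(v)$. Since an automorphism fixing $u$ permutes the neighbors of $u$, the $\Stab(u)$-orbit of $v$ has some size $k\geq 1$ with point-stabilizer $H$, so $|\Stab(u)|=k\,|H|$. By the symmetric argument, the $\Stab(v)$-orbit of $u$ has some size $j\geq 1$ with $|\Stab(v)|=j\,|H|$. Combining these with $\mu_\F(u)=[G:\Stab(u)]$ and $\mu_\F(v)=[G:\Stab(v)]$ yields the key identity
\[\mu_\F(u)\cdot k=\mu_\F(v)\cdot j.\]

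The heart of the argument is to show that $\min(k,j)=1$. Removing the edge $uv$ splits $T$ into two components, $A_u$ containing $u$ and $A_v$ containing $v$. A standard observation about tree automorphisms is that the $\Stab(u)$-orbits on the neighbors of $u$ are precisely the isomorphism classes (as rooted trees) of the subtrees hanging off $u$: given an isomorphism between two such hanging subtrees, one extends it by the identity on the remaining branches to obtain an automorphism of $T$ fixing $u$. Hence if $k\geq 2$, there is a neighbor $w\neq v$ of $u$ whose hanging subtree is rooted-tree isomorphic to $A_v$ rooted at $v$; but that hanging subtree lies inside $A_u\setminus\{u\}$, forcing $|A_v|\leq |A_u|-1$. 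The symmetric argument shows that $j\geq 2$ forces $|A_u|<|A_v|$, and these two strict inequalities cannot hold simultaneously.

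Combining $\min(k,j)=1$ with the identity above gives exactly the claimed divisibility: if $k=1$ then $\mu_\F(v)\mid\mu_\F(u)$, and if $j=1$ then $\mu_\F(u)\mid\mu_\F(v)$. The one point requiring care is the extension lemma identifying $\Stab(u)$-orbits with isomorphism classes of hanging subtrees; this is where the rigidity of rooted trees, together with the fact that $T$ is finite so that the orbit sizes are well-defined, is used, but the construction above makes it essentially immediate.
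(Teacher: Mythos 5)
Your proof is correct, but it takes a genuinely different route from the paper's. The paper first reduces to the case where one of the two nodes is a leaf, by iteratively deleting all leaves of $T$ and asserting that this does not change $\mu_\F(u)$ or $\mu_\F(v)$ for internal $u,v$; once one node is a leaf with unique neighbour, $\Stab(u)\subseteq\Stab(v)$ and orbit--stabilizer finishes immediately. You instead treat a general adjacent pair directly: setting $H=\Stab(u)\cap\Stab(v)$ gives $\mu_\F(u)\,k=\mu_\F(v)\,j$ with $k,j$ the sizes of the cross-orbits, and you force $\min(k,j)=1$ by comparing the sizes of the two components of $T$ minus the edge $uv$, using the (correctly justified) identification of $\Stab(u)$-orbits of neighbours with rooted-isomorphism classes of hanging subtrees. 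Your version is slightly longer but self-contained, and it sidesteps the one delicate point in the paper's argument: pruning all leaves can strictly enlarge the automorphism group of what remains and merge orbits of internal vertices (for instance, two internal vertices distinguished only by carrying different numbers of pendant leaves become interchangeable after pruning), so the paper's claim that leaf-removal preserves $\mu_\F$ of internal nodes requires more care than is given. Your component-size argument needs no such reduction, which is a real advantage; the paper's approach, when the reduction is available, buys a two-line conclusion from $\Stab(u)\subseteq\Stab(v)$.
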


\begin{proof}
We may reduce to the case where one of $u$ or $v$ is a leaf. This is because if neither is a leaf,
then it is not in the orbit of any leaf by graph automorphism, so we can remove all the leaves from
the tree $T$ without changing either of $\mu_\F(u)$ or $\mu_\F(v)$. This is done finitely many times
since $T$ is finite and always contains at least one leaf.

Now without loss of generality, suppose $u$ is the leaf and $v$ is its unique neighbour. By the
orbit-stabilizer theorem,
\[\bigl|\Aut(T)\bigr| = \mu_\F(u) \bigl|\Stab(u)\bigr| = \mu_\F(v)\bigl|\Stab(v)\bigr|,\]
where stabilizers are taken with respect to the group $\Aut(T)$. Every automorphism fixing $u$ must
permute its neighbours, but since $u$ only has one neighbour, we have $\Stab(u)\subseteq \Stab(v)$.
Thus
\begin{align}
\mu_\F(u)
&= {\mu_\F(v) \bigl|\Stab(v)\bigr|\over \bigl|\Stab(u)\bigr|} \cr
&= {\mu_\F(v) \bigl[\Stab(v):\Stab(u)\bigr] \bigl|\Stab(u)\bigr|\over \bigl|\Stab(u)\bigr|} \cr
&= \mu_\F(v) \bigl[\Stab(v):\Stab(u)\bigr],\cr
\end{align}
proving the lemma.
\end{proof}

The next lemma formalizes the intuitive
notation that in a free tree, the multiplicities are in some sense smaller towards the centre of the tree.

\begin{lem}
\label{middlemult}
Let $u \edge v \edge w$ be neighbouring nodes in a free tree $T$ with
$v$ being the central node. Then $v$ cannot have strict maximal free multiplicity among the three nodes;
that is, $\mu_\F(v) \leq \mu_\F(u)$ or $\mu_\F(v) \leq \mu_\F(w)$.
\end{lem}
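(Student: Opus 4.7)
The plan is to argue by contradiction: suppose that $\mu_\F(v) > \mu_\F(u)$ and $\mu_\F(v) > \mu_\F(w)$ both hold strictly. For an edge $xz$ of $T$, removing that edge splits $T$ into two components, and I will call the component containing $z$, rooted at $z$, the \emph{away-subtree} at $z$ for the edge $xz$.

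The key structural step is the following claim: if $x,y$ are adjacent in $T$ and $\mu_\F(y) > \mu_\F(x)$, then $x$ has at least two distinct neighbours $z$ whose away-subtrees for the edge $xz$ are isomorphic, as rooted trees, to the away-subtree at $y$ for $xy$. I would prove this by contrapositive via a short stabilizer calculation: if $y$ were the unique such neighbour of $x$, then every $\phi\in\Stab(x)$ must permute the neighbours of $x$ and map $y$ to a neighbour with an isomorphic away-subtree, forcing $\phi(y)=y$. Hence $\Stab(x)\subseteq\Stab(y)$, and the orbit-stabilizer identities $|\Aut(T)|=\mu_\F(x)|\Stab(x)|=\mu_\F(y)|\Stab(y)|$ give $\mu_\F(y)\leq\mu_\F(x)$, contradicting the hypothesis. (This is close in spirit to the proof of Lemma~\ref{sakslemma}.)

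The remainder is a size argument. Let $A$ and $B$ denote the two components of $T-uv$ containing $u$ and $v$ respectively, and let $C$ be the component of $T-vw$ containing $w$. Applying the structural claim to the edge $uv$, the vertex $u$ has at least two neighbours with away-subtrees isomorphic to $B$; since these two subtrees are vertex-disjoint and hang off $u$, we have $n\geq 1+2|B|$, and therefore $|A|=n-|B|\geq(n+1)/2 > n/2$. The same argument applied to the edge $vw$ yields $|C|>n/2$. Finally, since $v$ separates $u$ from $w$ in $T$, the sets $A$ and $C$ are vertex-disjoint subsets of $V(T)\setminus\{v\}$, so $|A|+|C|\leq n-1$; combined with the two size bounds this gives $n-1\geq|A|+|C|>n$, the desired contradiction. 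The main obstacle is the structural claim; once it is in hand, the rest is an elementary counting argument.
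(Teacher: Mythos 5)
Your proof is correct, but it takes a genuinely different route from the paper's. The paper first establishes Lemma~\ref{sakslemma} (adjacent nodes have multiplicities dividing one another, proved via a leaf-stripping reduction and orbit--stabilizer), then writes $\mu_\F(v)=s\mu_\F(u)=r\mu_\F(w)$ with $r,s\geq 2$, deduces that $u$ carries $s-1$ extra subtrees isomorphic to one side of the tree and $w$ carries $r-1$ extra subtrees isomorphic to the other, and derives the contradiction from the mutually referential size inequalities $|A|\geq(s-1)|B|+2$ and $|B|\geq(r-1)|A|+2$. You bypass the divisibility statement entirely: your structural claim (a strict multiplicity increase across an edge $xy$ forces a second neighbour of $x$ with an isomorphic away-subtree) is exactly the local stabilizer-containment argument $\Stab(x)\subseteq\Stab(y)$ applied directly at the relevant edge, with no reduction to leaves, and your endgame is the clean observation that the two disjoint components $A$ and $C$ would each have to contain more than half the vertices. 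What your approach buys is self-containment and robustness --- you do not inherit the slightly delicate leaf-removal step in the proof of Lemma~\ref{sakslemma}, and the counting at the end is a one-line disjointness contradiction rather than a pair of coupled inequalities. What the paper's route buys is the stronger arithmetic information (exact integer ratios $r$ and $s$ between adjacent multiplicities), which is of independent interest and is recorded there as a standalone lemma. Both arguments rest on the same underlying mechanism, namely orbit--stabilizer forcing duplicated subtrees at the endpoint of smaller multiplicity.
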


\begin{proof}
Suppose for contradiction that $\mu_\F(v) > \mu_\F(u)$ and $\mu_\F(v) > \mu_\F(w)$. Then, for each of the
pairs of neighbours
$u\edge v$ and $v\edge w$,
the multiplicity of one of the nodes must be an integer multiple of the multiplicity of the other,
by the previous lemma.
So there must be integers $r,s > 1$ such that
\[\mu_\F(v) = r \mu_\F(w)\quad\hbox{and}\quad \mu_\F(v) = s \mu_\F(u). \]
The situation is illustrated in Fig.~\ref{fig:landscape}. Since $\mu_\F(v) = s \mu_\F(u)$, $u$ must
have $s - 1$ children in the orbit of $v$ and thus have subtree rooted at each of these
children be isomorphic to $B$. Similarly, since $\mu_\F(v) = r \mu_\F(w)$, $w$ must have $r - 1$
child subtrees isomorphic to $A$.

\begin{figure}
\begin{center}
  \subfigure{\includegraphics{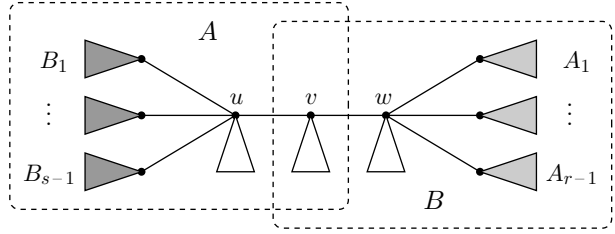}}
  \caption{Three adjacent nodes and their subtrees.}
  \label{fig:landscape}
\end{center}
\end{figure}

We note that in order to satisfy the $r,s>1$ requirements, we must have
\[|A| \geq (s - 1) |B| + 2 \quad\hbox{and}\quad |B| \geq (r - 1)|A| + 2,\]
where the additional $+2$ terms come respectively from nodes $u$ and $v$ (for $|A|$) or $v$ and
$w$ (for $|B|$). This implies that
\[|A| \geq (s - 1)(r - 1)|A| + 2s,\]
which is impossible if $|A|\geq 1$ and $r,s > 1$. The contradiction tells us that $v$
cannot have strict maximal multiplicity among the three nodes.
\end{proof}

We have established that if we embed a free tree into the $(x,y)$-plane and then lift the nodes
up by setting each node's $z$-coordinate to its multiplicity, then the result is a convex,
spidery bowl or valley. This is illustrated in Fig.~\ref{fig:spidery}.

\begin{figure}
\begin{center}
  \subfigure{\includegraphics{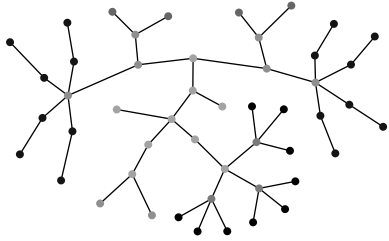}}
  \caption{Darker shades of grey indicate higher multiplicities in this free tree.}
  \label{fig:spidery}
\end{center}
\end{figure}

On a path between any two endpoints, the multiplicities decrease monotonically towards the centre
of the tree before increasing monotonically towards the endpoint. There is a central connected core
of nodes of minimal multiplicity and we are able to show
that this minimal multiplicity cannot be greater than 2.

\begin{lem}
\label{minmult}
If $F = (V,E)$ is a finite free tree, then the node of minimal multiplicity
in $F$ has multiplicity 1 or 2.
\end{lem}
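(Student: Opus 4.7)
The plan is to invoke the classical fact that every finite tree has a \emph{center} consisting of either one vertex or two adjacent vertices, and that this center is preserved setwise by every graph automorphism. The center is the set of vertices of minimum eccentricity, and since automorphisms are distance-preserving they permute this set; the standard proof by iterative leaf-removal also shows that in the two-vertex case (a bicenter) the two central vertices must be adjacent.

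Given this, I would split into two cases. If the center of $F$ is a single vertex $c$, then every $g \in \Aut(F)$ fixes $c$, so the orbit of $c$ is $\{c\}$, giving $\mu_\F(c) = 1$; hence the minimum multiplicity in $F$ is exactly $1$. If instead the center consists of two adjacent vertices $c_1$ and $c_2$, then every automorphism either fixes both $c_i$ or swaps them, so the orbit of $c_1$ is contained in $\{c_1, c_2\}$, giving $\mu_\F(c_1) \in \{1, 2\}$. In either case, the minimum multiplicity in $F$ is at most $2$, which is what we want.

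The main obstacle, if one prefers a proof that avoids citing the tree-center theorem, is to derive the analogous statement from the previous lemmas alone. One would let $C$ be the set of nodes realizing the minimum multiplicity $m$ and first use Lemma \ref{middlemult} to show $C$ is a connected subtree of $F$ (otherwise the path between two components of $C$ would contain a strict interior local maximum of $\mu_\F$, contradicting the lemma). Then, because graph automorphisms of $F$ preserve $\mu_\F$, the group $\Aut(F)$ restricts to an action on $C$ by graph automorphisms, and applying the same single-center/bicenter dichotomy to the (smaller) tree $C$ forces either $m = 1$ or $m = 2$, completing the argument.
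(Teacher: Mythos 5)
Your proof is correct, but it takes a genuinely different route from the paper's. You appeal to the classical Jordan center theorem: the set of vertices of minimum eccentricity in a finite tree is a single vertex or a pair of adjacent vertices, and since graph automorphisms preserve the path metric they preserve this set, so a central vertex has orbit of size at most $2$ and hence $\mu_\F \leq 2$ there. This is a clean, short argument that exhibits an explicit witness (the center) of multiplicity at most $2$, at the cost of importing an external classical fact. The paper instead argues by contraposition entirely within the machinery it has just built: assuming the minimal multiplicity $m$ exceeds $2$, it takes the orbit $C_u$ of a minimal node, observes that the subtree spanned by $C_u$ must contain a node $v\notin C_u$, uses Lemma~\ref{middlemult} together with minimality to conclude $\mu_\F(v)=m$, and iterates to produce infinitely many nodes, contradicting finiteness; the hypothesis $m>2$ is what guarantees the spanning subtree has such an interior node. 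The paper's version stays self-contained relative to Lemma~\ref{middlemult} and reinforces the ``convex bowl'' picture of the multiplicity landscape, while yours is arguably more direct. One small quibble: your suggested variant ``avoiding the tree-center theorem'' does not actually avoid it --- after using Lemma~\ref{middlemult} to show the set $C$ of minimal-multiplicity nodes is a subtree preserved by $\Aut(F)$, you still invoke the center dichotomy, just on $C$ instead of $F$; this is fine as a proof but is not the elimination of the citation that you advertise.
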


\begin{proof}
The proof is by contraposition. Let $u\in V(F)$ be a node of minimal multiplicity and suppose
$\mu_\F(u) >2$. Let $C_u$ be the orbit of $u$. There is a subtree
$F'$ whose endpoints are the members of $C_u$; since $m>2$ and the graph is connected, there is
necessarily at least one node $v\in F'\setminus C_u$. By Lemma~\ref{middlemult},
we have $\mu_\F(v)\leq \mu_\F(u)$ but by
minimality of $\mu_\F(u)$, we know that $\mu_\F(v) = \mu_\F(u)$.
So we can repeat the argument with $C_v$ to find that
the tree is infinite (at each step we are removing $\mu_\F(u)$ nodes from the free tree, but the process never
terminates).

Note that this argument does not work when $\mu_\F(u) = 2$ because $F'$ may simply consist of two nodes
connected by one edge.
\end{proof}

\begin{thm}
Let $T$ be a rooted tree with $n$ nodes; let $M(T)$ and $M_\F(T)$ be the
automorphic multiplicity and
free multiplicity of $T$, respectively. We have the inequality
\[M_\F(T) \leq 2M(T),\]
and this bound is the best possible.
\end{thm}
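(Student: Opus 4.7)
To prove $M_\F(T) \leq 2M(T)$, I would reduce the claim to showing that for a node $v^*$ realizing $\mu_\F(v^*) = M_\F(T)$, the free orbit $O$ of $v^*$ under $\Aut(T_\F)$ contains a $\Stab(o)$-orbit of size at least $|O|/2 = M_\F(T)/2$. Since $M(T) \geq \mu(o, v)$ for every node $v$, this would immediately give the inequality. The easy instance is when $\mu_\F(o) \leq 2$: then $[\Aut(T_\F) : \Stab(o)] = \mu_\F(o) \leq 2$, and restricting the transitive $\Aut(T_\F)$-action on $O$ to the subgroup $\Stab(o)$ yields at most two orbits partitioning $O$, so the larger contains at least $|O|/2$ elements.

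The general case where $\mu_\F(o) \geq 3$ requires more work. First, Lemma~\ref{minmult} produces a node $m \ne o$ with $\mu_\F(m) \in \{1,2\}$; applying the easy-case argument with $m$ in place of $o$ yields some $v \in O$ whose $\Stab(m)$-orbit in $O$ has at least $|O|/2$ elements. The main technical difficulty is then converting this bound for $\Stab(m)$ into one for $\Stab(o)$. My approach would be to trace the path $o = w_0, w_1, \ldots, w_\ell = m$ in $T$ and exploit Lemmas~\ref{sakslemma} and~\ref{middlemult}, which together force the sequence $\mu_\F(w_0), \ldots, \mu_\F(w_\ell)$ to be U-shaped with adjacent values in a divisibility relation. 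These constraints, combined with careful tracking of how $\Stab(w_i)$ and $\Stab(w_{i+1})$ are related as subgroups of $\Aut(T_\F)$, should allow one to inductively descend along the path and preserve the orbit-size bound up to the factor of $2$.

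For tightness, take $T$ to be the three-node path $a \edge b \edge c$ rooted at $o = a$. The rooted-tree stabilizer $\Stab(a)$ is trivial (each node on this path is pinned once $a$ is fixed), so $M(T) = 1$; but the reflection sending $a \leftrightarrow c$ and fixing $b$ is a free-tree automorphism, hence $\mu_\F(a) = \mu_\F(c) = 2$, giving $M_\F(T) = 2 = 2 M(T)$. Thus the constant $2$ cannot be improved.
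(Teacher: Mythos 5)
Your reduction is the right one (a $\Stab(o)$-orbit inside the maximal free orbit $O$ is exactly an automorphic equivalence class, so it suffices to find one of size $\geq |O|/2$), and your easy case is a clean, complete argument: when $[\Aut(T_\F):\Stab(o)]=\mu_\F(o)\leq 2$, the double-coset count shows $\Stab(o)$ has at most two orbits on the transitive $\Aut(T_\F)$-set $O$. Your tightness example (the path $a\edge b\edge c$ rooted at $a$) is also correct. But the general case $\mu_\F(o)\geq 3$ --- which is the generic situation, since the root of $T$ is arbitrary --- is only a plan, and the plan as stated does not go through. Lemmas~\ref{sakslemma} and~\ref{middlemult} constrain the \emph{free} multiplicities $\mu_\F(w_i)$ along the path from $o$ to $m$; they say nothing about how the subgroups $\Stab(w_i)$ sit inside one another, and in general neither of $\Stab(w_i)$, $\Stab(w_{i+1})$ contains the other for adjacent internal nodes (an automorphism fixing $w_i$ may move $w_{i+1}$ to a sibling branch). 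So the ``careful tracking'' of stabilizers is carrying the entire weight of the theorem and none of it is supplied. There is also a quantitative worry: a local step-by-step descent that loses even an occasional factor would compound along the path, whereas you need the \emph{total} loss between $\Stab(m)$ and $\Stab(o)$ to be at most $2$ --- a global statement that your local induction does not obviously deliver.

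For comparison, the paper avoids the descent entirely. It takes $s$ to be a node of minimal free multiplicity (so $\mu_\F(s)\in\{1,2\}$ by Lemma~\ref{minmult}), observes that rooting \emph{at} $s$ gives $M(T_s)=M_\F(T)$ or $M_\F(T)/2$, and then argues combinatorially about how the orbit $[v]$ distributes among the $d\geq 2$ branches at $s$: rerooting at any node $o$ kills at most the $M_\F(T)/d$ elements of $[v]$ lying in $o$'s own branch, leaving $(d-1)M_\F(T)/d\geq M_\F(T)/2$ elements that can still be permuted among themselves. That branch-counting argument is the ingredient your sketch is missing; if you want to salvage your group-theoretic framing, you would need to prove directly that the subgroup of $\Stab(s)$ fixing $o$ still acts with a large orbit on $[v]$, which amounts to the same branch analysis.
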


\begin{proof}
Suppose first that $n\geq 3$.
Let $v$ be a leaf of maximal automorphic multiplicity in $T_\F$, and let $[v]$ denote the set of nodes that
are free-congruent to $v$ (so $\bigl|[v]\bigr| = M_\F(T)$).
By Lemma~\ref{minmult}, a node $s$ of minimal automorphic multiplicity
either has $\mu_\F(s) = 1$ or $\mu_\F(s) = 2$, and since we assumed that $n\geq 3$, we can require that $s$ not be
a leaf.

If $\mu_\F(s) = 1$, then $M(T_s) = M_\F(T)$, since any automorphism of $T_\F$ already fixes $s$. The nodes
in $[v]$ all lie in some subtrees of $s$, and without loss of generality, we may assume that they do not
all lie in the same subtree, since if $s'$ is the only child of $s$ whose subtree contains nodes of $[v]$,
we can reroot the tree $T_s$ at $s'$ instead without changing the maximum automorphic multiplicity.
There are $d\geq 2$ children of the root whose subtrees contain elements of $[v]$; each one
contains an equal proportion of these nodes, so $d$ divides $M_\F(T)$. If we reroot the tree
at any node outside these subtrees, then the automorphic multiplicity of the tree does not change.
If, on the other hand,
we choose a node in one of these subtrees, then there are still $(d-1)M_\F(T)/d$ leaves that can still be shuffled
amongst themselves, so the maximum automorphic multiplicity is $(d-1)M_\F(T)/d \geq M_\F(T)/2$.

If $\mu_\F(s) = 2$, there is a node $s'$ that is free-congruent to $s$, and there is mirror symmetry in the graph.
This means that there is a way to split the graph along an edge such that the two sides
have the exact same shape, one
contains $s$, and the other contains $s'$. The side containing $s$ has $M_\F(T)/2$ members of $[v]$; call this
half $[v]_s$ and the other half $[v]_{s'}$.
When the tree is rooted at $s$, we find that $M(T_s) = M_\F(T)/2$, since any two members of $[v]_s$ can be
exchanged and any two members of $[v]_{s'}$ can be exchanged
(but exchanges cannot happen between the two subtrees).
And rerooting the tree at an arbitrary node, it is clear that the automorphic multiplicity of the
tree will not decrease.

When $n=1$ the statement is trivial, and taking $n=2$ shows that the bound is the best possible,
because if $T$ is the tree with a root and a single (leaf) child, then $M_\F(T) = 2$ and $M(T) = 1$.
\end{proof}

This theorem tells us that the asymptotics of the free multiplicity are the same as the asymptotics
of the automorphic
multiplicity, up to a fudge factor of $2$.

Because congruence of two nodes is immediately implied by their being identical under $\equiv$,
we have $S(T)\leq M(T)$ for all
rooted trees $T$.
Thus if $M_n = M(T_n)$ and $F_n = M_\F(T_n)$, where $T_n$ is a conditioned Galton--Watson tree
of size $n$, then if $\gamma$ is as defined in Lemma~\ref{lowerbound}, the inequality
\[F_n \geq M_n \geq (1-\eps){\log_2 n \over\log_2(1/\gamma)}\]
holds with probability tending to 1.

\acknowledgements
\label{sec:ack}
We thank the anonymous referee for numerous insightful comments that substantially improved the
the readability and rigour of the paper. We also thank Jonah Saks for helping us find a clean proof of
Lemma~\ref{sakslemma}.

\bibliography{multiplicity-latex}         
\bibliographystyle{abbrv}     

\end{document}